\newcommand{\CM}{Cohen-Macaulay}
\newcommand{\m}{\mathfrak{m} }
\newcommand{\q}{\mathfrak{q} }
\newcommand{\C}{\mathcal{C} }
\newcommand{\Fc}{\mathcal{F} }
\newcommand{\rt}{\rightarrow}
\newcommand{\Gc}{\mathcal{G} }
\newcommand{\p}{\mathfrak{p} }
\newcommand{\Min}{\operatorname{Min}}
\newcommand{\mSpec}{\operatorname{m-Spec}}
\newcommand{\height}{\operatorname{height}}
\newcommand{\rad}{\operatorname{rad}}
\newcommand{\Ass}{\operatorname{Ass}}
\theoremstyle{plain}
\newtheorem{theorem}{Theorem}[section]
\newtheorem{proposition}[theorem]{Proposition}
\theoremstyle{definition}
\newtheorem{remark}[theorem]{Remark}
\theoremstyle{remark}
\begin{document}

\title[$R_n$ ]{Local cohomology of ideals  and the $R_n$ condition of Serre}
\author{Tony~J.~Puthenpurakal}
\date{\today}
\address{Department of Mathematics, IIT Bombay, Powai, Mumbai 400 076}

\email{tputhen@gmail.com}
 \begin{abstract}
Let $R$ be a regular ring of dimension $d$ containing a field $K$ of characteristic zero. If $E$ is an $R$-module let $\Ass^i E = \{ Q \in \Ass E \mid \height Q = i \}$.
Let $P$ be a prime ideal in $R$ of height $g$. We show that if $R/P$ satisfies Serre's condition $R_i$ then $\Ass^{g+i+1}H^{g+1}_P(R)$ is a finite set. As an application of our techniques  we prove that if $P$ is a prime ideal in $R$ such that $(R/P)_\mathfrak{q}$ is regular for any non-maximal prime ideal $\mathfrak{q}$ then $H^i_P(R)$ has finitely many associate primes for all $i$.
\end{abstract}
 \maketitle
\section{introduction}
Throughout this paper $R$ is a commutative Noetherian ring. If $M$ is an $R$-module and if $I$ is an ideal in $R$, we denote by $H^i_I(M)$ the  $i^{th}$ local cohomology module of $M$ with respect to $I$.

The following conjecture  is due to Lyubeznik  \cite{Lyu-3};

\s \label{conj}\textbf{Conjecture:} If $R$ is a regular ring, then each local cohomology module $H^i_I(R)$ has
finitely many associated prime ideals.

There are many cases where this conjecture is true: by work of Huneke and Sharp \cite{HuSh},
for regular rings $R$ of prime characteristic; by work of Lyubeznik, for regular local and
affine rings of characteristic zero \cite{Lyu-1},  for unramified regular local rings of mixed
characteristic \cite{Lyu-4}. It is also true for smooth $\mathbb{Z}$-algebras by work of Bhatt et.al \cite{BB}.

In \cite{Lyu-3} Lyubeznik especially asked whether \ref{conj} is valid for a regular ring $R$ containing a field of characteristic zero. It is easy to give examples where existing techniques, to show finiteness of associate primes of local cohomology modules, fail.

For the rest of the paper assume that $R$ is regular and contains a field of characteristic zero.
For simplicity we assume that $\dim R = d$ is finite.
By Grothendieck vanishing theorem $H^i_I(R) = 0$ for all $i > d$, see \cite[6.1.2]{BSh}. In general for a Noetherian ring $T$ of dimension $d$
the set $\Ass_R(H^d_I(T))$ is  finite, see  \cite[3.11]{BRS} (also see \cite[2.3]{TM}). If $T$ is a regular ring of dimension $d$ and $I$ is an ideal in $T$ then using the Hartshorne-Lichtenbaum theorem, cf. \cite[14.1]{a7}  it is easy to prove that
$$\Ass_R (H^d_I(T)) = \left\{ P  \left|  P \in \Min T/I \ \text{and} \ \height P = d   \right.   \right \}.  $$
In \cite{P},  we proved that $H^{d-1}_I(R)$ is a finite set if $R$ is an excellent ring. We also proved that to prove Lyubeznik's conjecture for excellent regular rings containing a field of characteristic zero it is sufficient to prove
$\Ass H^{g+1}_I(R)$ is a  finite for all ideals of height $g$.
Practically nothing is known for rings which are not excellent. We also note that in examples of singular rings $S$ with ideals $I$ such that $\Ass H^i_I(S)$ is an infinite set the ring $S/I$ is regular, see \cite{S}, \cite{K} and \cite{SS}.  If $T$ is a regular ring  and $I$ is an ideal such that $T/I$ is a regular ring then it is not difficult to show that $\Ass H^i_I(T)$ is a finite set for all $i$. In this paper we prove
\begin{theorem}
\label{main} Let $R$ be a regular ring of dimension $d$, containing a field of characteristic zero. Let $P$ be a prime ideal in $R$ such that  $(R/P)_\mathfrak{q}$ is regular for any non-maximal prime ideal $\mathfrak{q}$.  Then $H^i_P(R)$ has finitely many associate primes for all $i$.
\end{theorem}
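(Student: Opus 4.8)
The plan is to split by cohomological degree, with $g=\height P$. Since $R$ is regular, hence Cohen--Macaulay, $\grade(P,R)=g$, so $H^i_P(R)=0$ for $i<g$; by Grothendieck vanishing $H^i_P(R)=0$ for $i>d$; and for $i=d$ the formula recalled just before the statement gives $\Ass H^d_P(R)\subseteq\{P\}$. So only the range $g\le i\le d-1$ needs work. The computational engine I would use is this: if $\q$ is a prime with $P\subseteq\q$ that is \emph{not} maximal, then $R_\q/P_\q$ is regular by hypothesis, so $P_\q$ is part of a regular system of parameters of $R_\q$ and in particular is generated by an $R_\q$-regular sequence of length $g$; hence $H^i_P(R)_\q=H^i_{P_\q}(R_\q)=0$ for $i\ne g$, while for $i=g$ one has $H^g_{P_\q}(R_\q)=\varinjlim_n\Ext^g_{R_\q}(R_\q/P_\q^{\,n},R_\q)$, in which each $R_\q/P_\q^{\,n}$ carries the $P_\q$-adic filtration with successive quotients free over the regular ring $R_\q/P_\q$, so $R_\q/P_\q^{\,n}$ is a perfect $R_\q$-module of grade $g$ and $\Ext^g_{R_\q}(R_\q/P_\q^{\,n},R_\q)$ is Cohen--Macaulay of dimension $\dim R_\q/P_\q$, supported on $V(P_\q)$, hence unmixed with unique associated prime $P_\q$.

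For $i=g$ I would argue $\Ass H^g_P(R)=\{P\}$ as follows. Since $\operatorname{Supp}H^g_P(R)=V(P)$, a non-maximal $Q\in\Ass H^g_P(R)$ contains $P$; localizing at $Q$ and using the engine together with the fact that the associated primes of a filtered colimit lie in the union of those of its terms, $QR_Q\in\Ass_{R_Q}H^g_P(R)_Q\subseteq\{P_Q\}$, so $Q=P$. For a maximal $\m\in\Ass H^g_P(R)$ one has $\depth_{R_\m}H^g_P(R)_\m=0$; feeding $H^q_P(R)_\m=0$ for $q<g$ and $H^j_{\m R_\m}(R_\m)=0$ for $j\ne\dim R_\m$ into the Grothendieck spectral sequence $H^p_{\m R_\m}\bigl(H^q_P(R)_\m\bigr)\Rightarrow H^{p+q}_{\m R_\m}(R_\m)$ collapses the degree-$g$ part to $H^0_{\m R_\m}\bigl(H^g_P(R)_\m\bigr)\cong H^g_{\m R_\m}(R_\m)$, which is nonzero only if $\dim R_\m=g$, forcing $\m=P$. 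Hence $\Ass H^g_P(R)=\{P\}$.

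For $g<i\le d-1$ the engine shows immediately that $\operatorname{Supp}H^i_P(R)$ is contained in the non-regular locus $\Sigma$ of $R/P$, which by hypothesis consists of maximal ideals of $R$; therefore $\Ass H^i_P(R)\subseteq\operatorname{Supp}H^i_P(R)\subseteq\Sigma$, and the theorem reduces to showing that $\operatorname{Supp}H^i_P(R)$ is finite for $g<i\le d-1$ (for which it would be enough that $\Sigma$ be finite).

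This last point is the real content, and the only place I expect the characteristic-zero hypothesis to be indispensable. It is harmless when $R$ is local, since then $\Sigma\subseteq\{\m\}$, and more generally when $R$ is excellent, since then $\Sigma$ is a closed subset of $\Spec R/P$ all of whose points are closed, hence a finite set of closed points of a Noetherian scheme; so the theorem has real substance only for non-excellent $R$. There I would bring in the characteristic-zero $D_R$-module machinery behind the implication ``$R/P$ satisfies $R_i$'' $\Longrightarrow$ ``$\Ass^{g+i+1}H^{g+1}_P(R)$ is finite'', with the aim of showing that $\operatorname{Supp}H^i_P(R)$ ($i>g$) is closed in $\Spec R$ --- which, being contained in $\Sigma$, must then be a finite set of maximal ideals. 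The hardest step, in short, will be establishing this finiteness of $\operatorname{Supp}H^i_P(R)$ for $i>g$ --- equivalently, of the non-regular locus of $R/P$ --- without an excellence hypothesis; once that is in place, the degree-by-degree bookkeeping above finishes everything.
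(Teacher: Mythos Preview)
Your bookkeeping is fine: the vanishing for $i<g$ and $i>d$, the case $i=d$, the spectral sequence pinning down $\Ass H^g_P(R)=\{P\}$, and the observation that for $i>g$ every prime in $\operatorname{Supp}H^i_P(R)$ must lie in the singular locus $\Sigma$ of $R/P$, hence among the maximal ideals, are all correct. But you stop exactly where the theorem starts. You reduce to ``$\operatorname{Supp}H^i_P(R)$ is finite'' and then propose to get this by showing the support is closed via ``$D_R$-module machinery''. That is not a proof: Lyubeznik's $D$-module finiteness results in characteristic zero are available only when $R$ is regular local or essentially of finite type over a field, and the whole point of the theorem is to handle regular rings with no such hypothesis (in particular non-excellent ones). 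There is no off-the-shelf holonomicity statement for $H^i_P(R)$ over an arbitrary regular ring containing $\mathbb{Q}$, so the appeal is circular. Note also that you are asking for more than you need: the theorem only claims $\Ass H^i_P(R)$ is finite, and nothing in the paper shows (or needs) that $\Sigma$ itself is finite.

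The paper's argument is completely different and uses no $D$-module input. One first arranges, via the faithfully flat extension $R\to R[[X]]_X$ of Section~2, that $R$ contains an \emph{uncountable} field. Now suppose $\Ass H^i_P(R)$ is infinite; pick a height $r$ realized by infinitely many associated primes $\{P_n\}$ (necessarily $r>g$, since the only prime of height $g$ in $V(P)$ is $P$). Localize at $R\setminus\bigcup_n P_n$ and run the construction of Section~3: over an uncountable field one can form the ring $T$ whose maximal spectrum is $\{P_nT\}$ and whose Jacobson radical $QT$ is a \emph{prime} of height $<r$, with the further property that any element outside $QT$ lies in only finitely many $P_nT$. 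The prime $Q$ in $R$ is strictly contained in infinitely many $P_n$, hence is non-maximal, so the hypothesis gives $(R/P)_Q$ regular and thus $P_Q=(u_1,\dots,u_g)$. Spreading this out and inverting one element (removing only finitely many $P_n$) produces a localization $L$ in which $PL$ is generated by $g$ elements, forcing $H^i_{PL}(L)=0$; but $L$ still has infinitely many maximal ideals, all associated to $H^i_{PL}(L)$. That contradiction is the missing engine.
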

It is not difficult to prove the above result when $R$ is excellent. The case when $R$ is not excellent requires new ideas.

If $E$ is an $R$-module let $\Ass^i E = \{ Q \in \Ass E \mid \height Q = i \}$. We show
\begin{theorem}
\label{main-r1} Let $R$ be a regular ring of dimension $d$,  containing a field of characteristic zero. Let $P$ be a prime ideal in $R$ such that $R/P$ satisfies Serre's condition $R_i$. Then
 $\Ass^{g+i+1}H^{g+1}_P(R)$ is a finite set.
\end{theorem}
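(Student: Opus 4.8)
The plan is to reduce the assertion to a finiteness statement about the minimal primes of $\operatorname{Supp} H^{g+1}_P(R)$, to dispose of the excellent case by elementary means, and then to treat the general case by approximating $R$ with finite type $\mathbb{Q}$-algebras.

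\emph{Step 1: reduction.} Let $Q\in\Ass^{g+i+1}H^{g+1}_P(R)$, so $\height Q=g+i+1$. If $Q\not\supseteq P$ then $PR_Q=R_Q$ and $H^{g+1}_P(R)_Q=0$, so $Q\supseteq P$; as $R$ is catenary and $\height P=g$ this forces $\height_{R/P}(Q/P)=i+1$. For every prime $Q'$ with $P\subseteq Q'\subsetneq Q$ we then have $\height_{R/P}(Q'/P)\le i$, so the hypothesis $R_i$ gives that $(R/P)_{Q'}$ is regular; hence $PR_{Q'}$ is generated by a regular sequence of length $g$, and $H^{j}_{P}(R)_{Q'}=H^{j}_{PR_{Q'}}(R_{Q'})=0$ for $j>g$. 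Two consequences: (a) $H^{g+1}_P(R)_Q$ is supported only at the closed point of $\Spec R_Q$, hence is $Q$-power torsion, so its only possible associated prime is $Q$; therefore $Q\in\Ass H^{g+1}_P(R)$ if and only if $H^{g+1}_P(R)_Q\neq0$. (b) Running the same vanishing over all of $\Spec R$, $\operatorname{Supp}H^{g+1}_P(R)$ is contained in the non-regular locus $W$ of $R/P$, and (by $R_i$ and catenarity) every prime of $W$ has height $\ge g+i+1$. From (b), any height-$(g+i+1)$ prime of $\operatorname{Supp}H^{g+1}_P(R)$ is a minimal prime of that support; combined with (a) this identifies $\Ass^{g+i+1}H^{g+1}_P(R)$ with the set of minimal primes of $\operatorname{Supp}H^{g+1}_P(R)$ of height $g+i+1$. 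So it suffices to show this last set is finite.

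\emph{Step 2: the excellent case, and the general strategy.} If the regular locus of $\Spec(R/P)$ is open (for instance if $R$ is excellent) then $W$ is closed, hence has finitely many minimal primes, all of height $\ge g+i+1$; so a height-$(g+i+1)$ minimal prime of $\operatorname{Supp}H^{g+1}_P(R)\subseteq W$ is a minimal prime of $W$, and we are done. For general $R$, where $W$ need not be closed, the plan is to descend to finite type. Since $R$ is regular and contains $\mathbb{Q}$, the map $\mathbb{Q}\to R$ is a regular homomorphism, so by Popescu's general N\'eron desingularization $R=\varinjlim_\lambda R_\lambda$ is a filtered colimit of smooth, finitely generated $\mathbb{Q}$-algebras, which we may take to be domains. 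Fixing $\lambda_0$ with $P_{\lambda_0}:=P\cap R_{\lambda_0}$ satisfying $P_{\lambda_0}R=P$, one gets $H^{g+1}_P(R)=\varinjlim_{\lambda\ge\lambda_0}H^{g+1}_{P_\lambda}(R_\lambda)$, and each $H^{g+1}_{P_\lambda}(R_\lambda)$ is a holonomic module over the ring of $\mathbb{Q}$-linear differential operators on $R_\lambda$ by \cite{Lyu-1}; in particular it has closed support and finitely many associated primes.

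\emph{Step 3: finishing, and the main obstacle.} The plan is to show that for $\lambda$ large the $R_i$ condition on $R/P$ forces $\operatorname{codim}\operatorname{Supp}H^{g+1}_{P_\lambda}(R_\lambda)\ge g+i+1$, and that every height-$(g+i+1)$ minimal prime $Q$ of $\operatorname{Supp}H^{g+1}_P(R)$ contracts to a height-$(g+i+1)$, hence minimal, prime of $\operatorname{Supp}H^{g+1}_{P_\lambda}(R_\lambda)$. Since the latter support has finitely many minimal primes and only finitely many primes of $R$ of a given height lie over a given prime of $R_\lambda$, finiteness of $\Ass^{g+i+1}H^{g+1}_P(R)$ follows. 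The hard part is exactly this bookkeeping: the transition maps in Popescu's system are maps between regular rings but need not be flat, so controlling supports and heights under contraction, and checking that $R_\lambda/P_\lambda$ inherits $R_i$ for $\lambda\gg0$, is delicate. I would handle it by localizing at the finitely many candidate primes and invoking miracle flatness — a local homomorphism of regular local rings of equal dimension is flat — to recover flat base change and going-down in the single codimension $g+i+1$ where the supports of the holonomic modules $H^{g+1}_{P_\lambda}(R_\lambda)$ live for large $\lambda$, which is all that is needed.
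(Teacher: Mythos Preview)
Your Step~1 reduction is clean and correct, and the excellent case in Step~2 is fine. But Step~3 has a genuine gap that I do not see how to close. The assertion ``only finitely many primes of $R$ of a given height lie over a given prime of $R_\lambda$'' is simply false for the maps arising from Popescu approximation: the transition maps $R_\lambda\to R$ are typically of enormous (infinite) relative dimension, so the fibre over a fixed $\mathfrak{q}\in\Spec R_\lambda$ can contain infinitely many primes of any prescribed height in $R$. Miracle flatness does not rescue this: even if for each individual $Q$ the localized map $(R_\lambda)_{\mathfrak{q}}\to R_Q$ were flat with zero-dimensional fibre, that is a pointwise statement and gives no global bound on the number of such $Q$. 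Nor is it clear that $R_\lambda/P_\lambda$ inherits $R_i$ for large $\lambda$, or that $Q\cap R_\lambda$ has height $g+i+1$; heights can drop under contraction along non-flat maps, and $R_i$ is a condition at infinitely many primes that does not descend by a finite-type argument. In short, the Popescu route trades a non-excellent ring for non-flat transition maps, and the bookkeeping you flag as ``delicate'' is in fact the whole problem.

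The paper's proof is entirely different and avoids both $\mathcal{D}$-modules and approximation. After passing to $R[[X]]_X$ to ensure $R$ contains an uncountable field, one supposes $\Ass^{g+i+1}H^{g+1}_P(R)=\{P_n\}_{n\ge1}$ is infinite and forms the semilocalization at the $P_n$. A combinatorial construction (using uncountability of the base field for prime avoidance over a countable family) produces a further localization $T$ whose Jacobson radical $Q=\rad T$ is a \emph{prime} with $\height Q\le g+i$, while $\mSpec(T)$ is still infinite and consists of images of the $P_n$. Now $R_i$ applies at $Q$: $(R/P)_Q$ is regular, so $P_Q$ is generated by $g$ elements; spreading this out and inverting one more element gives a localization $L$ with $PL$ generated by $g$ elements, whence $H^{g+1}_{PL}(L)=0$, contradicting the fact that the infinitely many maximal ideals of $L$ remain associated to it. The key idea you are missing is this production of a single prime $Q$ of height $\le g+i$ lying below infinitely many of the $P_n$, at which the $R_i$ hypothesis can be invoked and then propagated to a neighbourhood.
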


Here is an overview of the contents of this paper. In section two we consider the flat extension $R \rt R[[X]]_X$. This helps when $R$ contains a countable field $K$. Note $T = R[[X]]_X$ contains the field $K[[X]]_X$ which is uncountable. We show that to prove Theorems \ref{main} and \ref{main-r1} it suffices to assume that $R$ contains an uncountable field. In section three we describe a construction which is important to us. Finally in section four we prove Theorems \ref{main} and \ref{main-r1}.
\section{Passage to a ring containing an uncountable field}
In our arguments we need to assume that $R$ contains an uncountable field. When this is not the case we consider the flat extension $R \rt R[[X]]_X$.
Set $S = R[[X]]$ and $T = S_X = R[[X]]_X$, i.e., the ring obtained by inverting $X$.

\begin{remark}\label{passage}
(i) If $R$ contains a countable field $K$ then note $K[[X]]$ is a subring of $ S$ and so $K[[X]]_X $ is a subring of $T$. The field $K[[X]]_X$ is uncountable. Thus $T$ contains an uncountable field.

(ii) If $R$ is regular (\CM) then so is $S$, see \cite[2.2.13]{BH} and \cite[2.1.9]{BH}. Therefore $T$ is also regular (\CM).

\end{remark}

The following properties were proved in  our paper \cite[section 3]{P}.
\begin{proposition}(with hypotheses as above)
\begin{enumerate}[\rm (1)]
\item
$T$ is a faithfully flat $R$-algebra.
\item
If $P$ is a prime ideal in $R$ then $PT$ is a prime ideal in $T$ and $PT \cap R = P$. Conversely if $P$ is a prime ideal in $T$ then there is a prime ideal $Q$ of $R$ with $P = QT$.
Furthermore $\height P = \height PT$.
\item
$\dim R = \dim T$.
\item
Let $M$ be a $R$-module. The mapping defined by
\begin{align*}
\psi \colon \Ass_R(M) &\rt \Ass_T (M\otimes_R T) \\
               \p &\rt \p T
\end{align*}
is a bijection.
\item
 $\psi$ maps $\Ass^i_R(M)$ bijectively to $\Ass^i_T(M\otimes_R T)$.
\end{enumerate}
\end{proposition}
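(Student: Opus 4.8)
The plan is to prove the five items in order, the unifying tool being the identifications $R[[X]]/\aF R[[X]]\cong (R/\aF)[[X]]$ and $T/\aF T\cong (R/\aF)[[X]]_X$ for every ideal $\aF$ of $R$; these hold because $R$ is Noetherian, so $\aF$ is finitely generated and the flat $R$-algebra $R[[X]]$ satisfies $\aF\otimes_R R[[X]]=\aF R[[X]]$ and $(R/\aF)\otimes_R R[[X]]=(R/\aF)[[X]]$. For (1): $R[[X]]$ is the $(X)$-adic completion of the free $R$-algebra $R[X]$, and the completion of a Noetherian ring along an ideal is flat, so $R[[X]]$ is $R$-flat and hence so is the localization $T$; faithful flatness then reduces to checking $\m T\neq T$ for each maximal ideal $\m$ of $R$, which holds since $T/\m T\cong (R/\m)((X))\neq 0$.

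For (2): if $P\in\Spec R$ then $T/PT\cong (R/P)[[X]]_X$ is a localization of the domain $(R/P)[[X]]$, hence a domain, so $PT\in\Spec T$, and $PT\cap R=P$ by faithful flatness. For the height equality, going-down for the flat map $R\to T$ gives $\height PT\geq \height P$, while the induced flat local homomorphism $R_P\to T_{PT}$ has closed fibre $(T/PT)_{PT}=\operatorname{Frac}(T/PT)$, a field, so the dimension formula for flat local maps yields $\dim T_{PT}=\dim R_P$, i.e.\ $\height PT=\height P$. For the converse — that every prime of $T$ is extended from $R$ — I would analyse $\Spec R[[X]]$: since $1-Xg$ is a unit of $R[[X]]$ for every $g$, the element $X$ lies in the Jacobson radical of $R[[X]]$, so no maximal ideal of $R[[X]]$ survives in $T$; combined with $\operatorname{nil}(B[[X]])=(\operatorname{nil}B)[[X]]$ for Noetherian $B$ (which forces $\Min\bigl((R/\aF)[[X]]\bigr)=\{\p[[X]]:\p\in\Min(R/\aF)\}$, hence $\Min(\aF T)=\{\p T:\p\in\Min\aF\}$ for every ideal $\aF$) this describes $\Spec T$ in terms of $\Spec R$. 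I expect this to be the delicate step.

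For (3): $T$ is a localization of $R[[X]]$ and $\dim R[[X]]=\dim R+1=d+1$; a prime $\mathfrak P$ of $R[[X]]$ with $\height\mathfrak P=d+1$ satisfies $\dim R[[X]]/\mathfrak P=0$, so is maximal, hence contains $X$ and disappears in $T$, giving $\dim T\leq d$; conversely a saturated chain in $\Spec R$ extends strictly (by (2)) to one in $\Spec T$, so $\dim T=d=\dim R$. For (4): the rule $\p\mapsto\p T$ is well defined because an embedding $R/\p\hookrightarrow M$ gives, by flatness, $T/\p T\hookrightarrow M\otimes_R T$ with $\Ass_T(T/\p T)=\{\p T\}$ ($T/\p T$ being a domain), and it is injective since $\p T\cap R=\p$; surjectivity for finitely generated $M$ is the flat base-change formula $\Ass_T(M\otimes_R T)=\bigcup_{\p\in\Ass_R M}\Ass_T(T/\p T)=\{\p T:\p\in\Ass_R M\}$. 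For arbitrary $M$, write $M=\varinjlim M_\lambda$ over its finitely generated submodules; flatness makes $M\otimes_R T$ the increasing union $\bigcup_\lambda (M_\lambda\otimes_R T)$, so $\Ass_T(M\otimes_R T)=\bigcup_\lambda\Ass_T(M_\lambda\otimes_R T)=\{\p T:\p\in\Ass_R M\}$, which is the asserted bijection $\psi$. Item (5) is then immediate from the height equality in (2): $\psi$ preserves height, so it restricts to a bijection $\Ass^i_R(M)\to\Ass^i_T(M\otimes_R T)$.

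The main obstacle is the converse in (2); the remaining steps are flatness bookkeeping, the dimension formula for flat local homomorphisms, and the elementary structure theory of power series over Noetherian rings. One point worth keeping in mind is that in the intended applications $M$ is a local cohomology module $H^i_P(R)$, which need not be finitely generated, so the direct-limit argument in (4) is genuinely needed rather than a formality.
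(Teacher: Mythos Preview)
The paper does not give a proof here: it simply records that these five items were established in \cite[Section 3]{P}. Your arguments for (1), for the forward direction and the height equality in (2), for (3), and for (4)--(5) are correct and are the natural ones; your remark that the direct-limit argument in (4) is genuinely needed (because $H^i_P(R)$ is typically not finitely generated) is on point.

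The step you flag as delicate --- the converse in (2), that every prime of $T$ is extended from $R$ --- is in fact \emph{false} as literally stated, so no refinement of your sketch will close it. Take $R = K[t]$ and $S = K[t][[X]]$. The assignment $t \mapsto X$, $X \mapsto X$ defines a surjective ring map $S \to K[[X]]$ with kernel exactly $(t-X)$, so $(t-X)$ is a prime of $S$ not containing $X$, and $(t-X)T$ is a nonzero prime of $T = S_X$. Its contraction to $R$ is $(0)$, so the only candidate $Q$ is $(0)$, giving $QT = 0 \neq (t-X)T$. Your minimal-prime computation $\Min(\aF T) = \{\p T : \p \in \Min \aF\}$ is correct, but it only captures primes \emph{minimal} over an extended ideal, and $(t-X)T$ is not minimal over its contraction. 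Fortunately this converse is nowhere used: items (4) and (5) --- the only parts invoked in the reduction to an uncountable field --- follow from the forward direction of (2) alone, since all one needs is that $T/\p T$ is a domain so that $\Ass_T(T/\p T) = \{\p T\}$. With the converse clause of (2) excised, your proof is complete and adequate for the paper's purposes.
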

We will need the following{
\begin{proposition}
\label{ex-r1}
(with hypotheses as above). Also assume $R$ is \CM.
Let $P$ be a prime in $R$ such that $R/P$ satisfies $R_i$. Then $T/PT$ satisfies $R_i$.
\end{proposition}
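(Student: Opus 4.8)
The plan is to deduce the regularity of the local rings of $T/PT$ from that of $R/P$ by flat descent, the essential extra input being that $T$ itself is a regular ring (Remark~\ref{passage}). First I would note that $R/P\rt T/PT$ is flat, being the base change of the flat map $R\rt T$ along $R\rt R/P$; in particular it satisfies going-down. Hence, to show that $T/PT$ satisfies $R_i$, it suffices to fix a prime $\mathfrak{Q}$ of $T/PT$ with $\height\mathfrak{Q}\le i$ and prove that $(T/PT)_{\mathfrak{Q}}$ is regular.

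Fix such a $\mathfrak{Q}$. Let $\widetilde{\mathfrak{Q}}\sub T$ be its preimage under $T\rt T/PT$ and put $\widetilde{\mathfrak{q}}=\widetilde{\mathfrak{Q}}\cap R$; then $\widetilde{\mathfrak{q}}\supseteq P$, and $\mathfrak{q}:=\widetilde{\mathfrak{q}}/P$ is the contraction of $\mathfrak{Q}$ to $R/P$. Going-down for the flat map $R/P\rt T/PT$ gives $\height\mathfrak{q}\le\height\mathfrak{Q}\le i$, so the hypothesis that $R/P$ satisfies $R_i$ tells us that $(R/P)_{\mathfrak{q}}=R_{\widetilde{\mathfrak{q}}}/PR_{\widetilde{\mathfrak{q}}}$ is a regular local ring. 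Next I would compute the closed fibre of the flat local homomorphism $(R/P)_{\mathfrak{q}}\rt (T/PT)_{\mathfrak{Q}}$: using $(T/PT)_{\mathfrak{Q}}\cong T_{\widetilde{\mathfrak{Q}}}/PT_{\widetilde{\mathfrak{Q}}}$ and $P\sub\widetilde{\mathfrak{q}}$ one gets
\[
 (T/PT)_{\mathfrak{Q}}\,/\,\mathfrak{q}\,(T/PT)_{\mathfrak{Q}}\;=\;T_{\widetilde{\mathfrak{Q}}}\,/\,\widetilde{\mathfrak{q}}\,T_{\widetilde{\mathfrak{Q}}},
\]
which is exactly the closed fibre of the flat local map $R_{\widetilde{\mathfrak{q}}}\rt T_{\widetilde{\mathfrak{Q}}}$.

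To conclude I would invoke the standard ascent/descent of regularity along a flat local homomorphism $(A,\mathfrak{m}_A)\rt(B,\mathfrak{m}_B)$ of Noetherian local rings: $B$ is regular \ff\ both $A$ and the closed fibre $B/\mathfrak{m}_AB$ are regular (\cite{BH}). Since $T$ is regular, $T_{\widetilde{\mathfrak{Q}}}$ is regular, and hence the fibre $T_{\widetilde{\mathfrak{Q}}}/\widetilde{\mathfrak{q}}\,T_{\widetilde{\mathfrak{Q}}}$ is regular. Applying the same principle in the other direction to $(R/P)_{\mathfrak{q}}\rt(T/PT)_{\mathfrak{Q}}$---a flat local map whose source is regular and whose closed fibre has just been shown to be regular---yields that $(T/PT)_{\mathfrak{Q}}$ is regular, as needed.

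Most of this is a formal assembly of standard facts. The two points that need care are the identification of the two closed fibres (a short computation with the relevant localizations and quotients) and the bookkeeping with heights, where going-down is sufficient and catenarity is not needed. The one genuinely essential ingredient is that $T=R[[X]]_X$ is a regular ring; this is precisely where the regularity of $R$ (rather than just the \CM\ property) enters, so I expect no real obstacle beyond checking these routine points. One could instead argue directly via the isomorphism $T/PT\cong (R/P)[[X]]_X$, but passing through the flatness of $R/P\rt T/PT$ and the regularity of $T$ is cleaner.
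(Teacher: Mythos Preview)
Your argument has a genuine gap: the ``if and only if'' you invoke for regularity along a flat local map is false. What is standard (and what Matsumura 23.7 actually says) is: for a flat local homomorphism $(A,\m_A)\to(B,\m_B)$, (i) if $A$ and the closed fibre $B/\m_AB$ are regular then $B$ is regular, and (ii) if $B$ is regular then $A$ is regular. There is \emph{no} statement that regularity of $B$ forces regularity of the closed fibre, and indeed this fails. Take $A=k[[t]]\to B=k[[x,y]]$ with $t\mapsto xy$: this is local, and flat because $B$ is a torsion-free module over the DVR $A$; both $A$ and $B$ are regular, yet the closed fibre $B/\m_AB=k[[x,y]]/(xy)$ has dimension $1$ and embedding dimension $2$, hence is not regular. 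So from the regularity of $T_{\widetilde{\mathfrak{Q}}}$ you cannot conclude that the fibre $T_{\widetilde{\mathfrak{Q}}}/\widetilde{\mathfrak{q}}\,T_{\widetilde{\mathfrak{Q}}}$ is regular.

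The paper's proof sidesteps this entirely by using the structural fact specific to $T=R[[X]]_X$ recorded just above the proposition: every prime of $T$ has the form $\q T$ for a prime $\q$ of $R$, with $\q T\cap R=\q$ and $\height \q T=\height \q$. Applied to your $\widetilde{\mathfrak{Q}}$, this gives $\widetilde{\mathfrak{Q}}=\widetilde{\mathfrak{q}}\,T$, so the closed fibre of $(R/P)_{\mathfrak{q}}\to(T/PT)_{\mathfrak{Q}}$ is $T_{\widetilde{\mathfrak{Q}}}/\widetilde{\mathfrak{q}}\,T_{\widetilde{\mathfrak{Q}}}=T_{\widetilde{\mathfrak{Q}}}/\widetilde{\mathfrak{Q}}\,T_{\widetilde{\mathfrak{Q}}}$, a field. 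Then Matsumura 23.7(ii)---the valid direction---applies. Note too that this argument needs only that $R$ (hence $T$) is \CM, to control heights via catenarity; it never uses regularity of $T$, which is why the proposition is stated under the weaker \CM\ hypothesis. Your going-down argument for the height inequality is fine and in fact avoids catenarity, but you still need the ``primes of $T$ are extended'' fact (or, equivalently, your suggested isomorphism $T/PT\cong (R/P)[[X]]_X$ together with the same fact applied to $R/P$) to handle the fibre.
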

\begin{proof}
We note that $T$ is also \CM. So both $R, T$ are universally catenary.
Let $\height P = g$.
Let $Q$ be  a prime ideal in $T$ containing  $PT$ with $\height Q = g + i$. Let $Q = \q T$. Then $\height \q = g + i$ and $\q$ contains $P$.
So $(R/P)_\p$ is regular. The extension $(R/P)_\q \rt (T/PT)_Q$ is flat with field as a fiber. By \cite[23.7]{Mat} it follows that $(T/PT)_Q$ is regular.
\end{proof}
\section{Construction}
In this section we describe a construction that we use to prove our results.

\s \label{const-K} Let $R$ be a Noetherian ring containing an uncountable field $K$. Assume \\
$\mSpec(A) = \{ \m_n  \mid n \geq 1 \}$ and that $\height \m_n = d$ for all $n$.

Let $$\C  = \{ \bigcap_{\m \in \Fc }\m \mid  \Fc \ \text{is an infinite subset of } \ \mSpec(A)\}.$$
As $R$ is Noetherian $\C$ has maximal elements. Let $Q$ be a maximal element in $\C$.
Then $Q = \bigcap_{\m \in \Fc }\m$ for some infinite subset $\Fc$ of $\mSpec(A)$. We note that as $Q$ is a maximal element in $\C$ we have
 $Q = \bigcap_{\m \in \Gc }\m$ for any infinite subset $\Gc$ of $\Fc$.

  Consider the multiplicatively closed set
\[
S = R \setminus \bigcup_{\m \in \Fc} \m.
\]
Set $T = S^{-1}R$.

The following result  from \cite[4.4]{P} gives information about primes in $T$.
\begin{proposition}
\label{prop-const}(with hypotheses as in \ref{const-K}).
We have
\begin{enumerate}[\rm (1)]
\item
If $\p T$ is a prime in $T$, where $\p$ is a prime in $R$, then $\p \subseteq \m$ for some $\m \in \Fc$.
\item
If $\m_i, \m_j \in \Fc$ then
$\m_i T \nsubseteq \m_j T$  if $\m_i \neq \m_j$..
\item
$\m T$  for $\m \in \Fc$ are distinct maximal ideals of $T$.
\item
$\mSpec(T) = \{ \m T\}_{\m \in \Fc}$.
 \end{enumerate}
\end{proposition}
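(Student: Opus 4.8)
The plan is to reduce Proposition~\ref{prop-const} to two ingredients: the standard correspondence between the prime ideals of a localization $T = S^{-1}R$ and the prime ideals of $R$ disjoint from the multiplicative set $S$, and one elementary fact from linear algebra, namely that a vector space over an uncountable field is not a countable union of proper subspaces. Recall that the prime ideals of $T$ are exactly the ideals $\p T$ with $\p$ a prime of $R$ satisfying $\p \cap S = \emptyset$, that $\p \mapsto \p T$ is an inclusion-preserving bijection from this set onto $\Spec T$, and that its inverse is contraction, $\p T \cap R = \p$. In our setting $S = R \setminus \bigcup_{\m \in \Fc}\m$, so $\p \cap S = \emptyset$ is precisely $\p \subseteq \bigcup_{\m\in\Fc}\m$, and every $s \in S$ satisfies $s \notin \m$ for all $\m \in \Fc$; I will use these translations repeatedly.

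First I would prove (1), which carries all the content. If $\p T$ is a proper prime of $T$ then $\p \cap S = \emptyset$, so $\p \subseteq \bigcup_{\m\in\Fc}\m$, hence $\p = \bigcup_{\m\in\Fc}(\p \cap \m)$. Since $K \subseteq R$, both $\p$ and each $\p \cap \m$ are $K$-subspaces of $R$, and $\Fc$ is countable because $\Fc \subseteq \mSpec(R) = \{\m_n \mid n \ge 1\}$ by the hypothesis of \ref{const-K}. By the linear-algebra fact, $\p$ cannot be a countable union of proper $K$-subspaces, so $\p \cap \m_0 = \p$ for some $\m_0 \in \Fc$; that is, $\p \subseteq \m_0$.

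Parts (2)--(4) then follow formally. For (2): if $\m_i \ne \m_j$ lie in $\Fc$ then $\m_i \not\subseteq \m_j$ (both are maximal), so choose $x \in \m_i \setminus \m_j$; for any $s \in S$ we have $s \notin \m_j$, and $\m_j$ is prime, so $sx \notin \m_j$, hence $x/1 \notin \m_j T$ although $x/1 \in \m_i T$, giving $\m_i T \not\subseteq \m_j T$. For (3): each $\m \in \Fc$ has $\m \cap S = \emptyset$, so $\m T$ is a proper prime with $\m T \cap R = \m$; if a prime $\mathfrak{P} = \q T$ of $T$ contains $\m T$ then $\m = \m T \cap R \subseteq \q T \cap R = \q$, and maximality of $\m$ in $R$ together with $\q \ne R$ forces $\q = \m$, so $\mathfrak{P} = \m T$; thus $\m T$ is maximal, and the $\m T$ are distinct by (2). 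For (4): each $\m T$ with $\m \in \Fc$ is maximal by (3); conversely a maximal ideal of $T$ equals $\p T$ for some prime $\p$ of $R$ with $\p \cap S = \emptyset$, so (1) gives $\p \subseteq \m_0$ for some $\m_0 \in \Fc$, whence $\p T \subseteq \m_0 T \subsetneq T$ and maximality of $\p T$ yields $\p T = \m_0 T$.

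The only step with any real content is (1), and within it the one input that is not pure localization bookkeeping is the linear-algebra fact; this is also exactly where the hypotheses of \ref{const-K} enter essentially — that $R$ contains an uncountable field, and that $\mSpec(R)$ is countable, so that $\p = \bigcup_{\m\in\Fc}(\p\cap\m)$ is genuinely a \emph{countable} union rather than an arbitrary one. The fact itself is elementary: one exhibits an affine line inside $\p$ on which the subspaces $\p\cap\m$ cut out at most one point each, so that the $|K| > \aleph_0$ points of the line cannot all be covered. By contrast, the maximal element $Q$ of $\C$ from \ref{const-K} plays no role in this proposition and is recorded there only for later use in Section~4.
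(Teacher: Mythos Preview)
Your proof is correct. The paper itself does not prove Proposition~\ref{prop-const}; it simply imports the statement from \cite[4.4]{P}. The argument you give---reducing (1) to the fact that a vector space over an uncountable field is not a countable union of proper subspaces (this is where both standing hypotheses of \ref{const-K}, the uncountable field $K\subseteq R$ and the countability of $\mSpec R$, are used), and then reading off (2)--(4) from the order-preserving bijection between $\Spec T$ and the primes of $R$ disjoint from $S$---is the expected one and is presumably how the cited reference proceeds as well. Your side remark that the maximal element $Q$ of $\C$ plays no role in this proposition is also correct; $Q$ first enters in Theorem~\ref{prop}.
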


We will also need the following intersection result from \cite[4.5]{P}.
\begin{proposition}\label{prop-const-2}(with hypotheses as in \ref{const-K}).
Let $\Lambda$ be any subset of $\Fc$ ( possibly infinite). Set
$$ U = \bigcap_{\m \in \Lambda} \m \quad \text{and} \ V = \bigcap_{\m \in \Lambda} \m T. $$
Then $U T = V$.
\end{proposition}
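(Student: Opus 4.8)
The plan is to prove the two inclusions $UT\subseteq V$ and $V\subseteq UT$ separately. The inclusion $UT\subseteq V$ is formal and insensitive to the cardinality of $\Lambda$: since $U=\bigcap_{\m\in\Lambda}\m\subseteq\m$ for every $\m\in\Lambda$, extending ideals to $T$ gives $UT\subseteq\m T$ for each such $\m$, hence $UT\subseteq\bigcap_{\m\in\Lambda}\m T=V$. The content is in the reverse inclusion, which enters through the fact that each $\m\in\Fc$ misses the multiplicative set $S$.

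For $V\subseteq UT$ the key observation is that for every $\m\in\Fc$ one has $\m\cap S=\emptyset$, directly from $S=R\setminus\bigcup_{\m\in\Fc}\m$. Consequently $\m$ is $S$-saturated: if $sx\in\m$ with $s\in S$ then $x\in\m$, because $\m$ is prime and $s\notin\m$. Equivalently, $\m T\cap R=\m$ for every $\m\in\Fc$. Now let $y\in V$ and write $y=x/s$ with $x\in R$ and $s\in S$. For each $\m\in\Lambda\subseteq\Fc$ we have $y\in\m T$, so $x/1=sy\in\m T$, whence $x\in\m T\cap R=\m$. Since this holds for all $\m\in\Lambda$ we conclude $x\in\bigcap_{\m\in\Lambda}\m=U$, and therefore $y=x/s\in UT$.

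The argument is short, and there is no serious obstacle; the one point deserving attention is the implication $x/1\in\m T\Rightarrow x\in\m$, i.e. that contracting the extended ideal recovers $\m$. This is exactly where disjointness of $\m$ from $S$ is used, and it is also what makes the statement survive for infinite $\Lambda$: localization does not in general commute with infinite intersections of submodules, and it is the $S$-saturatedness of each maximal ideal $\m\in\Fc$ that is responsible for the identity $UT=V$ here.
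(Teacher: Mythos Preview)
Your proof is correct. Both inclusions are handled cleanly, and you have identified the one nontrivial point: the contraction identity $\m T\cap R=\m$ for $\m\in\Fc$, which follows from $\m\cap S=\emptyset$ and primality of $\m$. This is exactly what allows the argument to go through for an infinite $\Lambda$, where naive commutation of localization with intersections would fail.

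The paper itself does not give a proof of this proposition; it simply imports the result from \cite[4.5]{P}. Your argument is the standard and natural one, and is almost certainly what appears in the cited reference: the content is precisely that each $\m\in\Fc$ is $S$-saturated, so that extension--contraction is the identity on these primes and hence also on any intersection of them.
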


\s The following result describes the properties of $T$ that we need.
\begin{theorem}\label{prop}
(with hypotheses as in \ref{const-K}). We have
\begin{enumerate}[\rm (1)]
\item
$QT = \rad T$.
\item
Let $\Lambda$ be any infinite subset of $\Fc$. Then
$$ QT = \bigcap_{\m \in \Lambda} \m T.$$
\item
$QT$ is a prime ideal of $T$.
\item
$\height \m T = \height \m = d$ for all $\m \in \Fc$.
\item
If $f\notin QT$ then $f$ belongs to utmost finitely many maximal ideals of $T$.
\end{enumerate}
\end{theorem}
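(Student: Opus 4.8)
The plan is to derive all five statements from Propositions \ref{prop-const} and \ref{prop-const-2} together with the defining maximality of $Q$ in $\C$, namely that $Q = \bigcap_{\m \in \Gc}\m$ for \emph{every} infinite subset $\Gc$ of $\Fc$, not merely for $\Fc$ itself.

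First I would prove (2). Fix an infinite $\Lambda \subseteq \Fc$. By maximality of $Q$ we have $Q = \bigcap_{\m \in \Lambda}\m$, so Proposition \ref{prop-const-2} applied to $\Lambda$ gives $QT = \bigcap_{\m \in \Lambda}\m T$. Statement (1) is then immediate: by Proposition \ref{prop-const}(4) the maximal ideals of $T$ are exactly the $\m T$ with $\m \in \Fc$, hence $\rad T = \bigcap_{\m \in \Fc}\m T$, and this equals $QT$ by (2) with $\Lambda = \Fc$. For (3) I would argue as follows. First, $QT = \rad T$ is a proper ideal of $T$ (it lies inside $\m T$ for any $\m \in \Fc$, and $\Fc \neq \emptyset$). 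Now suppose $ab \in QT$ with $a, b \in T$. For each $\m \in \Fc$ we have $ab \in \m T$, and since $\m T$ is prime either $a \in \m T$ or $b \in \m T$; thus $\Fc = \Fc_a \cup \Fc_b$ where $\Fc_a = \{\m \in \Fc : a \in \m T\}$ and $\Fc_b = \{\m \in \Fc : b \in \m T\}$. As $\Fc$ is infinite, at least one of $\Fc_a, \Fc_b$ is infinite; if $\Fc_a$ is, then by (2) we get $a \in \bigcap_{\m \in \Fc_a}\m T = QT$, and symmetrically if $\Fc_b$ is. Hence $QT$ is prime.

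For (4), I would simply use standard localization theory: the primes of $T = S^{-1}R$ contained in $\m T$ are in order-preserving bijection with the primes of $R$ contained in $\m$ (every prime of $R$ inside $\m \in \Fc$ is automatically disjoint from $S$, and contraction along $R \to T$ recovers it, so inclusions are both preserved and reflected); this identifies the two height computations and yields $\height \m T = \height_R \m = d$. Finally, (5) is the contrapositive of (2): if some $f \in T$ lay in the maximal ideals $\m T$ for all $\m$ in an infinite subset $\Lambda \subseteq \Fc$, then $f \in \bigcap_{\m \in \Lambda}\m T = QT$, contrary to $f \notin QT$. I do not expect a serious obstacle; the two points that need care are invoking the maximality of $Q$ in the correct (for-all-infinite-$\Gc$) form in the proofs of (2) and (5), and the pigeonhole splitting of $\Fc$ in the proof of (3), which is exactly what makes $QT$ prime even though $Q$ itself is only an intersection of maximal ideals of $R$.
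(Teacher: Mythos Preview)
Your proof is correct and follows essentially the same approach as the paper: parts (1), (2), (4), and (5) are argued in the same way (the paper even calls (4) ``trivial''), relying on Propositions \ref{prop-const} and \ref{prop-const-2} together with the maximality of $Q$.

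The only noteworthy difference is in (3). The paper argues via the minimal prime decomposition of the radical ideal $QT = P_1 \cap \cdots \cap P_s$: if $s \geq 2$, the sets $\Gamma_i = \{\m T \mid \m T \supseteq P_i\}$ cover $\mSpec(T)$, so some $\Gamma_i$ is infinite, and then (2) forces $P_i \subseteq QT$, contradicting minimality. Your argument is the element-wise version of the same pigeonhole idea: given $ab \in QT$, split $\Fc$ into $\Fc_a$ and $\Fc_b$ and apply (2) to whichever is infinite. Both proofs hinge on the same mechanism---an infinite subfamily of maximal ideals still intersects down to $QT$---but yours is slightly more direct since it avoids invoking a prime decomposition.
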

\begin{proof}
(1) This follows from  \ref{prop-const} and \ref{prop-const-2}.

(2)We note that by maximality of $Q$ in $\C$ we have $ Q = \bigcap_{\m \in \Lambda} \m.$
The result now follows from \ref{prop-const-2}.

(3) Note $QT$ is a radical ideal. So $Q T= P_1\cap P_2 \cap \cdots \cap P_s$ for some primes  $P_i$ with $s \geq 1$ and that $P_i \nsubseteq P_j$ if $i \neq j$. Suppose if possible $s \geq 2$. Let $\m T$ be a maximal ideal of $T$. Then note that $\m T \supseteq P_i$ for some $i$. Let
$$\Gamma_i = \{ \m T \mid \m T \supseteq  P_i \}.$$
Then
$$\mSpec(T) = \bigcup_{i = 1}^{s} \Gamma_i.$$
It follows that $\Gamma_i$ is infinite for some $i$. Without any loss of generality we assume $i = 1$.
Then note that $P_1 \subseteq \bigcap_{\m T\in \Lambda_1} \m T = QT$.
It follows that $P_2 \supseteq P_1$, contradicting the choice of $P_2$. Thus $QT$ is a prime ideal in $T$.

(4)This is trivial.

(5) Suppose if possible there exists an infinite subset $\Gc$ of $\mSpec(T)$ such that $f \in \m T$ for every $\m T \in \Gc$. It follows that
$f \in \cap_{\m T \in \Gc} \m T$. But by construction the latter module is $QT$. This is a contradiction.
\end{proof}

\s \label{unit} If $f \notin QT$ we describe a process of making $f$ a unit. Let $\Gc$ be the finite set consisting of ALL the maximal ideals $\m $  of $T$ such that $f \in \m$. Set
$$ W = T \setminus \bigcup_{\m \in \mSpec(T)\setminus \Gc}\m   \quad \text{and} \quad L = W^{-1}T.$$
We note that $f$ is a unit in $L$. Also if $\m \in \mSpec(T)\setminus \Gc $ then $\m L$ is a maximal ideal in $L$. Furthermore $Q L = \rad L$.
\section{Proof of Theorems \ref{main} and \ref{main-r1}}
In this section we give proofs of Theorems \ref{main} and \ref{main-r1}.
We first give
\begin{proof}[Proof of Theorem \ref{main-r1}]
By our results in section two we may assume that $R$ contains an uncountable field of characteristic zero. Suppose $\Ass^{g+i+1}H^{g+1}_P(R)$ is an  infinite set. Let
 $$\Ass^{g+i+1}H^{g+1}_P(R) = \{ P_n \mid n \geq 1 \}.$$
  Consider $W = R \setminus \bigcup_{n \geq 1}P_n$ and set $S = W^{-1}R$. Next we do the construction as described in section three.
  Let $T$ be the ring obtained with $Q = \rad T$. We note that $\dim T = g + i + 1$. We also have $\height Q \leq g + i$. As $R/P$ satisfies $R_i$ it follows that $R_Q/PR_Q$ is a regular ring. It follows that $P_Q = (u_1, \ldots, u_g)$. So there exists $f \notin Q$ such that $P_f = (u_1, \ldots, u_g)$. By process of making $f$ a unit, see \ref{unit}, we get a further localization $L$ of $T$ such that $PL$ is generated by
  $g$ elements. So $H^{g+1}_{PL}(L) = 0$ and all maximal ideals of $L$ are associate primes of $H^{g+1}_{PL}(L) $.  This is a contradiction. Thus $\Ass^{g+i+1}H^{g+1}_P(R)$ is a  finite set.
\end{proof}

Next we give
\begin{proof}[Proof of Theorem \ref{main}]
By our results in section two we may assume that $R$ contains an uncountable field of characteristic zero. Suppose $\Ass H^i_P(R)$ is infinite for some $i$.  Set $M = H^i_P(R)$. We may assume that there exists $r$ such that $M$ contains infinitely many associate primes $P$ of height $r$ and there exists only finitely many associate prime of $M$ with height $< r$, Let $\{ P_n \}_{n \geq 1}$ be infinitely many associate primes of $M$ of height $r$. We note that necessarily $r > g = \height P$. Consider $W = R \setminus \bigcup_{n \geq 1}P_n$ and set $S = W^{-1}R$. Next we do the construction as described in section three.
  Let $T$ be the ring obtained with $Q = \rad T$. We note that $\dim T = r$ and $\height Q < r$. By our assumption $(R/P)_Q$ is a regular local ring. It follows that $P_Q = (u_1, \ldots, u_g)$. So there exists $f \notin Q$ such that $P_f = (u_1, \ldots, u_g)$. By process of making $f$ a unit, see \ref{unit},  we get a further localization $L$ of $T$ such that $PL$ is generated by
  $g$ elements. So $H^{r}_{PL}(L) = 0$ and all maximal ideals of $L$ are associate primes of $H^{r}_{PL}(L) $.  This is a contradiction. Thus $\Ass H^{i}_P(R)$ is a  finite set.
\end{proof}

\end{document}